\theoremstyle{plain}
\newtheorem{theor}{Theorem}
\newtheorem{prop}[theor]{Proposition}
\newtheorem{cor}[theor]{Corollary}
\newtheorem{lemma}[theor]{Lemma}
\theoremstyle{remark}
\def\R{{\mathbb R}}
\def\Prob{{\mathbb P}}
\def\N{{\mathbb N}}
\def\Vol{{\mathrm{Vol}}}
\def\Net{{\mathcal N}}
\def\Ill{{\mathcal I}}
\title{Randomized coverings of a convex body with its homothetic copies, and illumination}
\author{Galyna Livshyts\footnote{School of Mathematics, Georgia Institute of Technology, glivshyts6@math.gatech.edu}\;
and Konstantin Tikhomirov\footnote{Dept.\ of Math.\ and Stats., University of Alberta, ktikhomi@ualberta.ca. A part of this work was done when
K.T. visited GeorgiaTech in November, 2015.}}
\begin{document}

\maketitle

\begin{abstract}
We present a probabilistic model of illuminating a convex body by independently distributed light sources.
In addition to recovering C.A. Rogers' upper bounds for the illumination number, we improve previous estimates of
J.~Januszewski and M. Nasz\'odi for a generalized version of the illumination parameter.
\end{abstract}

\section{Introduction}

Given a convex body (i.e.\ a compact convex set with non-empty interior)
$K$ in $\R^n$ and points $p_1,p_2,\dots,p_m\in\R^n\setminus K$,
we say that the collection $\{p_1,p_2,\dots,p_m\}$ {\it illuminates $K$} if for any point $x$ on the boundary of $K$ there
is a point $p_i$ such that the line passing through $x$ and $p_i$ intersects the interior of $K$
at a point not between $p_i$ and $x$.
The {\it illumination number} $\Ill(K)$ is the cardinality of the smallest collection of points illuminating $K$.

The well known conjecture of H.~Hadwider \cite{H60},
independently formulated by I.~Gohberg and A.~Markus, asserts that $\Ill(K)\leq 2^n$ for any $n$-dimensional
convex body, with the equality attained for parallelotopes. The problem is known to be equivalent
to the question whether every convex body can be covered by at most $2^n$ smaller homothetic copies of itself
(see, for example, V.~Boltyanski, H.~Martini, P. S.~Soltan, \cite[Theorem~34.3]{BMS}).
For a detailed discussion of the problem and a survey of partial results, we refer to
\cite[Chapter~VI]{BMS}, K.~Bezdek \cite[Chapter~3]{B10} and a recent survey by K.~Bezdek and M. A.~Khan \cite{BK}.

An upper bound for the illumination number, which follows from a classical covering argument of C.A.~Rogers
\cite{R57}, is
\begin{equation}\label{eq: Ill number}
\Ill(K)\leq (n\log n+n\log\log n+5n)\frac{\Vol_n(K-K)}{\Vol_n(K)}
\end{equation}
(see, for example, K.~Bezdek \cite[Theorem~3.4.1]{B10}). Here, $\Vol_n(\cdot)$ is the Lebesgue measure in $\R^n$,
and $K-K$ is the Minkowski sum of $K$ and $-K$. Using the estimate of $\Vol_n(K-K)$ due to C.A.~Rogers
and G.C.~Shephard \cite{RS}, we get $\Ill(K)\leq (1+o(1)){2n\choose n} n\log n$. Moreover,
for a centrally-symmetric $K$ we clearly have $\Vol_n(K-K)=2^n\Vol_n(K)$, whence $\Ill(K)\leq (1+o(1))2^n n\log n$.

The proof of \eqref{eq: Ill number} based on C.A.~Rogers' covering of $\R^n$,
combines probabilistic and deterministic arguments,
and does not give much information about the arrangement of points illuminating $K$.
One of motivations for this work was to present a simple probabilistic model for the illumination,
which provides more data about the collection of the light sources.
In fact, we consider a more general question of covering a given convex body with its positive homothetic copies
of different sizes.
We prove the following:

\begin{prop}\label{pure randomization}
Let $n$ be a sufficiently large positive integer, $K$ be a
convex body in $\R^n$ with the origin in its interior and let numbers $(\lambda_i)_{i=1}^m$ satisfy
$\lambda_i\in (e^{-n},1)$ ($i=1,2,\dots,m$) and
$$\sum\limits_{i=1}^m{\lambda_i}^n\geq (n\log n+n\log\log n+4n)\frac{\Vol_n(K-K)}{\Vol_n(K)}.$$
For each $i$, let $X_i$ be a random vector uniformly distributed inside the set $K-\lambda_i K$,
so that $X_1,X_2,\dots,X_m$ are jointly independent.
Then the random collection of translates $\{X_i+\lambda_i K\}_{i=1}^m$ covers $K$ with probability at least $1-e^{-0.3n}$.
\end{prop}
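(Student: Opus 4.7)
The plan is to split the proof into a pointwise probability estimate and a discretization/uniformization step, with the main work concentrated in the second step.

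For the pointwise estimate, fix $p\in K$ and $i$. The event $\{p\in X_i+\lambda_i K\}$ is equivalent to $\{X_i\in p-\lambda_i K\}$. Since $p\in K$, $0\in K$, $K$ is convex and $\lambda_i\in(0,1)$, one verifies that $p-\lambda_i K\subseteq K-\lambda_i K\subseteq K-K$. Because $X_i$ is uniform on $K-\lambda_i K$, this gives
\[\Prob(p\in X_i+\lambda_i K)=\frac{\lambda_i^n\Vol_n(K)}{\Vol_n(K-\lambda_i K)}\geq \frac{\lambda_i^n\Vol_n(K)}{\Vol_n(K-K)}.\]
Joint independence of the $X_i$'s and the inequality $1-t\leq e^{-t}$ then yield, for every $p\in K$,
\[\Prob\Bigl(p\notin\bigcup_i(X_i+\lambda_i K)\Bigr)\leq \exp\!\Bigl(-\frac{\Vol_n(K)}{\Vol_n(K-K)}\sum_i\lambda_i^n\Bigr)\leq e^{-(n\log n+n\log\log n+4n)}.\]

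To pass from pointwise to uniform coverage, I would exploit affine invariance of the hypothesis (the quantity $\Vol_n(K-K)/\Vol_n(K)$ and the $\lambda_i$'s are unchanged under invertible affine maps) to place $K$ in John-like position with $B_2^n\subset K$. I would then introduce a notion of \emph{deep coverage}: $q$ is deeply covered by $i$ with margin $\eta<\lambda_i$ if $q\in X_i+(\lambda_i-\eta)K$. Because $B_2^n\subset K$ and $K$ is convex, one checks that $(\lambda_i-\eta)K+\eta B_2^n\subseteq\lambda_i K$, so deep coverage of $q$ automatically covers every point within Euclidean distance $\eta$ of $q$. The deep-coverage probability, by the same computation as above, is at least $(\lambda_i-\eta)^n\Vol_n(K)/\Vol_n(K-K)$. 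Choosing $\eta$ proportional to $\lambda_i$ (say $\eta=\tau\lambda_i$ with $\tau$ of order $1/n$) costs only a factor $(1-\tau)^n\asymp 1$ in the exponent.

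The substantive obstacle, and where the argument becomes delicate, is the width of the range $\lambda_i\in(e^{-n},1)$: a single Euclidean net cannot simultaneously afford coarse deep coverage for large $\lambda_i$'s and the very fine deep coverage required by small $\lambda_i$'s. A naive net at the finest scale $\tau e^{-n}$ would have size $\geq e^{n^2}$, far too many for a union bound against $e^{-n\log n}$. I would resolve this via a dyadic/chaining scheme: partition indices by bands $\lambda_i\in(2^{-k-1},2^{-k}]$, introduce a net $\Net_k$ at scale $\tau 2^{-k-1}$, and observe that if any $p\in K$ is uncovered then, at every band $k$, the nearest net point in $\Net_k$ must fail to be deeply covered by any index in that band. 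Different bands use disjoint---hence independent---families of $X_i$'s, so the intersection event factorizes and its probability is at most $\exp(-(1-\tau)^n\sum_i\lambda_i^n\Vol_n(K)/\Vol_n(K-K))$.

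The hard part is then executing the union bound over admissible ``chains'' $(q_0,q_1,\dots)$ of nearest net points, not simply summing net sizes across bands (which would blow up). I expect the correct bookkeeping to be a Dudley-style increment bound: given $q_{k-1}$, the point $q_k$ is forced to lie within distance $O(\tau 2^{-k})$ of $q_{k-1}$, so the effective branching at each level is a dimension-free constant. Together with the factor $(1-\tau)^n(n\log n+n\log\log n+4n)$ in the exponent and the slack of $4n$ (rather than Rogers' $5n$) built into the hypothesis, this should leave a surplus of at least $0.3n$ in the exponent, yielding the claimed $e^{-0.3n}$ bound on the failure probability. Verifying that the chain-count estimate indeed fits into this surplus, especially in the extremal regime where nearly all $\lambda_i$ accumulate near $e^{-n}$, is the step I would expect to require the most care.
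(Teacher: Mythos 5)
Your setup is correct as far as it goes, but there is a genuine gap that the chaining scheme cannot close, and it traces back to the pointwise estimate. You bound $\Prob(p\in X_i+\lambda_i K)\geq \lambda_i^n\Vol_n(K)/\Vol_n(K-K)$ using only the trivial inclusion $K-\lambda_i K\subseteq K-K$. This loses the factor that makes the proposition true in the regime of small $\lambda_i$. The paper's Lemma~\ref{K-K} gives the sharper estimate $\Vol_n(K-\lambda K)\leq (1+\lambda)^n\Vol_n(K-K)/2^n$, which boosts the pointwise coverage probability by a factor $2^n/(1+\lambda_i)^n$. When $\lambda_i$ is small this factor is close to $2^n$; it is precisely this boost that offsets the enormous net required at fine scales. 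Consider the extremal case $\lambda_i\equiv e^{-n}$ (allowed by the hypothesis): your pointwise failure bound is only $\exp(-(1-\tau)^n(n\log n+n\log\log n+4n))\approx e^{-n\log n}$, while the $\tau e^{-n}$-scale net has $\approx e^{n^2}$ points. No slack of order $n$ in the hypothesis (the ``$4n$ versus $5n$'') can bridge a gap between $e^{-n\log n}$ and $e^{n^2}$.

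The chaining idea does not rescue this, for two reasons. First, the per-level branching is \emph{not} dimension-free: the number of $\delta$-separated net points of $\Net_k$ inside a ball of radius $O(\delta)$ around $q_{k-1}$ is $c^n$ for some constant $c>1$ by a volume argument, so over the $\Theta(n)$ dyadic bands between $1$ and $e^{-n}$ the total chain count is $e^{\Theta(n^2)}$. Second, even granting some idealized bookkeeping, the chain entropy cannot drop below the size of the net at the finest \emph{active} scale (the coarser chain segments do not ``pay for'' the fine-scale localization), and that is $e^{\Theta(n^2)}$ whenever the $\lambda_i$ mass sits near $e^{-n}$. The paper resolves this much more simply: a single $\varepsilon$-net with $\varepsilon$ chosen according to a two-case dichotomy (most mass from $\lambda_i\geq 1-4\log n/n$, or not), and in the hard case $\varepsilon=e^{-n}/(n\log n)$ makes the net of size $\approx e^{n^2}$, which is beaten because the exponent in the probability bound, after Lemma~\ref{K-K}, becomes of order $n^3$ (the factor $2^n/(1+\lambda_i)^n\gtrsim n^2$ for $\lambda_i$ below the threshold, times $\sum\lambda_i^n\cdot\Vol_n(K)/\Vol_n(K-K)\gtrsim n\log n/\log n = n$). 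The ingredient you need is exactly this sharpening of $\Vol_n(K-\lambda_i K)$; without it the approach fails, and with it the multiscale chaining is unnecessary.
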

As an easy corollary of the above statement, we obtain:
\begin{cor}\label{illumination}
Let $n$ be a large positive integer, and $K$ be a convex body in $\R^n$ with the origin in its interior.
Then there is a number $R>0$ depending only on $n$ with the following property:
Let $X$ be a random vector uniformly distributed over $K-K$, and let
$$
m:=\Bigl\lceil (n\log n+n\log\log n+5n)\frac{\Vol_n(K-K)}{\Vol_n(K)}\Bigr\rceil.
$$
Let $X_1,X_2,\dots,X_m$ be independent copies of $X$.
Then with probability at least $1-e^{-0.3 n}$ the collection $\{RX_1,RX_2,\dots,RX_m\}$
illuminates $K$.
\end{cor}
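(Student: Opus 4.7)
The plan is to reduce the corollary to Proposition~\ref{pure randomization} via the classical equivalence (see, e.g., \cite[Theorem~34.3]{BMS}) between illumination of $K$ and covering $K$ by smaller positive homothets of itself: if $K\subseteq\bigcup_i(y_i+\lambda K)$ for some $\lambda\in(0,1)$ and each $p_i:=y_i/(1-\lambda)$ lies outside $K$, then $\{p_i\}$ illuminates $K$. Fixing a $\lambda$ close to $1$ depending only on $n$ and setting $R:=1/(1-\lambda)$, it therefore suffices to extract, with high probability, a sub-collection $\{X_i\}_{i\in S}$ with each $X_i\in K-\lambda K$ whose translates $\{X_i+\lambda K\}_{i\in S}$ cover $K$.

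I would take $\lambda=1-\delta$ with $\delta$ a small enough multiple of $1/(n\log n)$, so that $R=1/\delta$ depends only on $n$. Since $0\in K$, we have $\lambda K\subseteq K$ and hence $K-\lambda K\subseteq K-K$; consequently each $X_i$ uniform on $K-K$, conditional on the event $E_i:=\{X_i\in K-\lambda K\}$, is uniform on $K-\lambda K$. The count $N:=\#\{i\leq m:E_i\text{ holds}\}$ is binomial with success probability $p:=\Vol_n(K-\lambda K)/\Vol_n(K-K)$, and the inclusion $K-\lambda K\supseteq\lambda(K-K)$ yields $p\geq\lambda^n$. Combined with the definition of $m$, this gives
\[
\mathbb{E}[N]\,\lambda^n\;\geq\;m\lambda^{2n}\;\geq\;\Bigl(1+\tfrac{1}{\log n+\log\log n+4}\Bigr)\lambda^{2n}\cdot(n\log n+n\log\log n+4n)\,\frac{\Vol_n(K-K)}{\Vol_n(K)},
\]
and for $\delta$ sufficiently small, $(1+\Omega(1/\log n))\lambda^{2n}$ exceeds $1$ with room to spare. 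The Brunn--Minkowski inequality gives $\Vol_n(K-K)\geq 2^n\Vol_n(K)$, so the mean $\mathbb{E}[N]$ is of order at least $2^n n\log n$; a standard Chernoff bound then shows that $N\lambda^n$ exceeds the threshold $(n\log n+n\log\log n+4n)\,\Vol_n(K-K)/\Vol_n(K)$ required by Proposition~\ref{pure randomization} with failure probability far smaller than $e^{-0.3n}$.

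Conditional on the set $S$ of indices with $X_i\in K-\lambda K$, the variables $\{X_i\}_{i\in S}$ are independent and uniform on $K-\lambda K$, so Proposition~\ref{pure randomization} (applied with common ratio $\lambda_i=\lambda$) yields the covering $K\subseteq\bigcup_{i\in S}(X_i+\lambda K)$ with conditional probability at least $1-e^{-0.3n}$. The equivalence from the first paragraph then implies that $\{RX_i\}_{i\in S}$, and a fortiori $\{RX_1,\dots,RX_m\}$, illuminates $K$, provided no $RX_i$ lies in $K$. This last event is ruled out by the crude estimate
\[
\Prob(RX_i\in K)\;\leq\;\frac{\Vol_n(K)}{R^n\,\Vol_n(K-K)}\;\leq\;R^{-n},
\]
so a union bound over $i\leq m$ yields a super-exponentially small contribution for $R$ of order $n\log n$.

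The delicate point is the quantitative tuning of $\lambda$: it must be close enough to $1$ that $p\lambda^n$ stays within a $1+O(1/\log n)$ factor of unity, while remaining in $(e^{-n},1)$ as required by Proposition~\ref{pure randomization}. The gap between the constants $5n$ and $4n$ appearing in the corollary and in Proposition~\ref{pure randomization} is precisely what buys this $\Omega(1/\log n)$ margin.
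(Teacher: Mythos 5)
Your proposal is correct and reduces to Proposition~\ref{pure randomization} via the same quantitative covering-to-illumination lemma, but the probabilistic core is handled differently. The paper simply applies Proposition~\ref{pure randomization} with $\lambda_i=1-\varepsilon$ to the vectors $X_i$ uniform on $K-K$; this technically does not match the hypothesis of the proposition, which assumes $X_i$ uniform on $K-\lambda_i K$. The mismatch is harmless (the Case~1 computation inside Proposition~\ref{pure randomization} already discards the favorable factor $2^n/(1+\lambda_i)^n$, so rerunning it with denominator $\Vol_n(K-K)$ in place of $\Vol_n(K-\lambda_i K)$ changes nothing), but the paper does not make this explicit. You sidestep the issue entirely by conditioning on the event $X_i\in K-\lambda K$, noting correctly that the conditional law is exactly uniform on $K-\lambda K$, so Proposition~\ref{pure randomization} applies verbatim to $\{X_i\}_{i\in S}$. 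The price is a Chernoff estimate showing $N\lambda^n$ exceeds the threshold; your bookkeeping here is right: $p\ge\lambda^n$ from $K-\lambda K\supseteq\lambda(K-K)$, $\mathbb{E}[N]\gtrsim 2^n n\log n$ via Brunn--Minkowski, and the $5n$-versus-$4n$ slack supplies the $\Omega(1/\log n)$ margin needed for $m\lambda^{2n}$ to dominate the threshold. Your extra check that $RX_i\notin K$ is also a useful tightening that the paper glosses over. The only minor wrinkle is that your total failure probability is $e^{-0.3n}$ plus the (super-exponentially small) Chernoff and union-bound terms, which is nominally slightly larger than $e^{-0.3n}$; but since the proof of Proposition~\ref{pure randomization} actually produces a bound $\ll e^{-0.3n}$ (its final display has $\ll-0.3n$), the residual slack there absorbs your extra error terms, so the stated $1-e^{-0.3n}$ still holds.
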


Let us note that illumination of convex sets by independent random light sources was previously considered
in literature. Namely, O.~Schramm \cite{S} used such a model to estimate the illumination number
for bodies of constant width; later, this approach was generalized by K.~Bezdek to
so-called fat spindle bodies \cite{B12}.

\bigskip

Proposition~\ref{pure randomization} allows us to study the following notion, closely related to the
illumination number.
For a convex body $K$ in $\R^n$, define $f_n(K)$
to be the least positive number such that for any sequence
$(\lambda_i)$ ($\lambda_i\in[0,1)$) with $\sum{\lambda_i}^n> f_n(K)$
there are points $x_i\in\R^n$ such that the collection of homothets $\{\lambda_i K+x_i\}$
covers $K$.
It was shown by A.~Meir and L.~Moser \cite{MM68} that $f_n\bigl([0,1]^n\bigr)=2^n-1$.
For an arbitrary convex body $K$, J.~Januszewski \cite{J03} showed that
$f_n(K)\leq (n+1)^n-1$.
Further, M. Nasz\'odi \cite{N10} showed that for any $K$ with its center of mass at the origin,
$$f_n(K)\leq 2^n\frac{\Vol_n(K+\frac{1}{2}K\cap(-K))}{\Vol_n(K\cap (-K))}\leq\begin{cases}
3^n,&\mbox{if $K=-K$},\\6^n,&\mbox{otherwise.}\end{cases}$$
We refer to P.~Brass, W.~Moser, J.~Pach \cite[p.~131]{BMP} for a more extensive discussion of this quantity.

Our Proposition~\ref{pure randomization}, together with a Rogers--type argument,
gives the following:
\begin{cor}\label{main result}
Let $n$ be a (large enough) positive integer and $K$ be a convex body in $\R^n$.
Then, with the quantity $f_n(K)$ defined above,
we have
$$f_n(K)\leq \Bigl\lceil(n\log n+n\log\log n+5n)\frac{\Vol_n(K-K)}{\Vol_n(K)}\Bigr\rceil.$$
\end{cor}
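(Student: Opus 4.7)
The strategy is to extend Proposition~\ref{pure randomization}, which handles $\lambda_i\in(e^{-n},1)$, to the full range $\lambda_i\in[0,1)$ permitted by the definition of $f_n(K)$, by combining the proposition with a classical Rogers-type deterministic covering for the very small $\lambda_i$'s. Translating $K$ so that the origin lies in its interior and truncating the sequence, we may assume $(\lambda_i)_{i=1}^m$ is finite with $\sum\lambda_i^n>\lceil M\rceil$, where $M:=(n\log n+n\log\log n+5n)\Vol_n(K-K)/\Vol_n(K)$. Split the indices into large scales $L:=\{i:\lambda_i>e^{-n}\}$ and small scales $S:=\{i:\lambda_i\leq e^{-n}\}$. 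The gap of $n\,\Vol_n(K-K)/\Vol_n(K)$ between the corollary's $+5n$ and the proposition's $+4n$ is exactly the slack that will absorb the contribution from $S$.

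Case 1 (easy): if $\sum_{i\in L}\lambda_i^n\geq(n\log n+n\log\log n+4n)\Vol_n(K-K)/\Vol_n(K)$, Proposition~\ref{pure randomization} applied to $(\lambda_i)_{i\in L}$ yields, with probability at least $1-e^{-0.3n}>0$, a random collection of translates covering $K$. Selecting any deterministic realization and placing the remaining homothets $\lambda_i K$ for $i\in S$ at an arbitrary point of $K$ finishes the argument. Case 2 (hard): otherwise $\sum_{i\in S}\lambda_i^n>n\,\Vol_n(K-K)/\Vol_n(K)$, and the proposition is not applicable (its $\epsilon$-net step breaks below scale $e^{-n}$). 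Here I would invoke a purely deterministic Rogers-type covering \cite{R57}: dyadically partition $S$ by scale and either apply Rogers at a single dense scale directly, or build a hierarchical covering by first covering $K$ with translates of a moderate-scale homothet and then covering each of those with finer $\lambda_iK$'s.

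The main obstacle is Case 2 when the small-scale mass is spread across many dyadic levels, so that no single bin contains enough mass for an immediate Rogers covering at that scale. In that regime one must combine several dyadic levels via a multi-scale/hierarchical construction, and verify quantitatively that the total small-scale mass $n\,\Vol_n(K-K)/\Vol_n(K)$ indeed suffices once the Rogers densities are balanced across the levels. This careful multi-scale covering is the principal technical step; the rest of the proof is a clean bookkeeping argument that packages the two cases together.
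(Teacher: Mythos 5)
Your plan has the same skeleton as the paper's proof: split the scales into large and small, apply Proposition~\ref{pure randomization} to the large ones, and invoke a deterministic Rogers-type covering plus a dyadic binning for the small ones. That decomposition and the observation that the ``$+5n$ vs.\ $+4n$'' slack is what pays for the small scales are both correct, and Case~1 as you describe it is exactly right.

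However, Case~2 is where essentially all the work of the corollary lives, and your proposal explicitly defers it (``this careful multi-scale covering is the principal technical step''). What is missing is not a small verification but the actual argument. Concretely, the paper proves and uses a separate ingredient, Lemma~\ref{Rogers variation}, a finite, quantitative Rogers-type statement: if $-\frac1n K\subset K\subset B_\infty^n$, $L\geq n^2$, and finitely many homothets $\lambda_i K$ with $\lambda_i\in[\frac12,1]$ have total volume at least $(n\log n+n\log\log n+5n)\Vol_n(LB_\infty^n)$, then they translatively cover $L B_\infty^n$. The Case~2 construction then (i) normalizes $K$ via John's theorem so $B_\infty^n\subset K\subset n^{3/2}B_\infty^n$, (ii) covers $K$ by $n^{-3/2}$-cubes, (iii) groups the small indices into dyadic bins $I_k$ with $\lambda_i n^5\in(2^{-k},2^{-k+1}]$, (iv) shows the total volume of dyadic cubes of size $2^{-k+1}n^{-3/2}$ affordable from bins with $F(k)\geq n$ exceeds the volume of the union of base cubes, and (v) places these dyadic cubes (possible because they tile the base cubes) and covers each one by $\{\lambda_iK\}_{i\in I_k^\ell}$ using Lemma~\ref{Rogers variation}. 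None of this is in your write-up, and the choice of threshold is not a free parameter: the paper uses $n^{-5}$ (not $e^{-n}$) precisely so that after rescaling a bin to unit scale, the ratio of cube size to homothet size is exactly $n^2$, the minimum $L$ the lemma allows, and the rescaled $\lambda_i$'s land in $[\frac12,1]$. With your cut at $e^{-n}$, you would have to rework the binning and the parameters of the Rogers lemma; it is plausible but not automatic. Also note that the paper does not use a ``hierarchical'' covering of $K$ by moderate homothets that are in turn covered; it performs a single Rogers covering per dyadic bin and assembles them by cube-tiling, which is simpler and sidesteps error accumulation across levels. In short: the strategy is aligned with the paper's, but the core technical step (the deterministic covering lemma and its quantitative bookkeeping) is stated as a to-do rather than carried out, so as written the proposal does not constitute a proof of the corollary.
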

We remark, that together with the Rogers--Shephard bound
on the volume of the difference body from \cite{RS}, Corollary~\ref{main result} implies that
$$f_n(K)\leq \begin{cases}
2^nn\log n(1+o(1)),&\mbox{if $K=-K$},\\ \frac{1}{\sqrt{\pi n}}4^nn\log n(1+o(1)),&\mbox{otherwise.}\end{cases}$$

We note that several other illumination-related quantities, different from $f_n(K)$, were considered
in literature. We refer, in particular, to \cite{Bezdek92, BL, Sw}.

\bigskip

Let us emphasize that proofs of all the above statements are very simple.
The purpose of this note is to put forward a randomized model for studying the illumination number
and its generalizations.
We believe that such viewpoint to the Illumination Problem will prove useful.
We give a proof of Proposition~\ref{pure randomization} in Section~\ref{s: main res}, whereas the corollaries
are derived in Section~\ref{s: cor}.

\section{Notation and preliminaries}

The standard vector basis in $\R^n$ is denoted by $\{e_1,e_2,\dots,e_n\}$.
For a non-zero vector $v\in\R^n$, $v^{\perp}$ is the hyperplane orthogonal to $v.$
By $B_\infty^n$ we denote the cube $[-1,1]^n$.

Given two sets $A,B\subset\R^n$, the Minkowski sum $A+B$ is defined as
$$A+B:=\{x+y\,:\, x\in A, \, y\in B\}.$$
Let $K$ be a convex body, and let $\varepsilon>0$.
Then an $\varepsilon$-net $\Net$ on $K$ is a set of points
$\{x_i\}\subset \R^n$ such that the collection of convex sets $\{x_i+\varepsilon K\}$ covers $K$.

We make the following observation.

\begin{lemma}\label{K-K}
Let $n\geq 2$ be an integer. For every convex body $K$ in $\R^n$ and for every $\lambda\in [0,1]$ we have
$$\Vol_n(K-\lambda K)\leq(1+\lambda)^n \frac{\Vol_n(K-K)}{2^n}.$$
\end{lemma}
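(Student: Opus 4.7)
The plan is to apply the Brunn--Minkowski inequality to the pair $A := K - \lambda K$ and $-A = \lambda K - K$, which have equal volume. The key observation is the set containment
$$A + (-A)\subset (1+\lambda)(K-K).$$
Indeed, a generic element of $A + (-A)$ has the form $(k_1-\lambda k_2) + (\lambda k_3 - k_4) = (k_1+\lambda k_3) - (k_4+\lambda k_2)$ with $k_1,k_2,k_3,k_4 \in K$, and by convexity of $K$ both $k_1+\lambda k_3$ and $k_4+\lambda k_2$ lie in $(1+\lambda)K$.

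Once that containment is established, Brunn--Minkowski gives
$$\Vol_n(A+(-A))^{1/n}\geq \Vol_n(A)^{1/n}+\Vol_n(-A)^{1/n}=2\,\Vol_n(K-\lambda K)^{1/n},$$
while the containment bounds the left-hand side above by $(1+\lambda)\Vol_n(K-K)^{1/n}$. Rearranging and raising to the $n$-th power produces exactly the asserted inequality.

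I do not anticipate any real obstacle: the entire argument fits in a few lines once one thinks of pairing $A$ with $-A$ so that Brunn--Minkowski contributes the factor $2^n$ in the denominator. The only spot where minor care is needed is the set containment above, which uses convexity of $K$ in an essential way (it is actually an equality, but only the inclusion is needed).
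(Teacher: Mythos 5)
Your proof is correct. The containment $A+(-A)\subset(1+\lambda)(K-K)$, with $A:=K-\lambda K$, holds exactly as you argue: each of $k_1+\lambda k_3$ and $k_4+\lambda k_2$ equals $(1+\lambda)$ times a convex combination of two points of $K$, hence lies in $(1+\lambda)K$, so the difference lies in $(1+\lambda)K-(1+\lambda)K=(1+\lambda)(K-K)$. Brunn--Minkowski applied to $A$ and $-A$ (which have equal volume) then gives $2\,\Vol_n(A)^{1/n}\le(1+\lambda)\,\Vol_n(K-K)^{1/n}$, which rearranges to the stated bound.

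Your route is genuinely different from the paper's. The paper first rescales to reduce the claim to $\max_{\nu\in[0,1]}\Vol_n\bigl(\nu K-(1-\nu)K\bigr)=\Vol_n(K-K)/2^n$, then lifts to dimension $n+1$: it forms $\mathcal C=\mathrm{conv}\bigl(K\times\{0\}\cup(-K)\times\{1\}\bigr)$, identifies the slice at height $\nu$ with $\nu K-(1-\nu)K$, and invokes the central symmetry of $\mathcal C$ about $\tfrac12 e_{n+1}$ to conclude that the middle slice has maximal volume. That last step is Brunn's concavity theorem for parallel sections, itself a consequence of Brunn--Minkowski, so both arguments ultimately rest on the same inequality. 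Yours applies Brunn--Minkowski directly to the Minkowski sum $A+(-A)$ and avoids the auxiliary $(n+1)$-dimensional body, which makes it a bit more self-contained; the paper's lift is a classical Rogers--Shephard-style device that is convenient when one wants to read off the whole family of section volumes $\nu\mapsto\Vol_n(\nu K-(1-\nu)K)$ at once, not just the one endpoint needed here.
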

\begin{proof}
Observe that
$$\Vol_n(K-\lambda K)=(1+\lambda)^n \Vol_n\left(\mu K-(1-\mu)K\right),$$
where $\mu=\frac{1}{1+\lambda}\in [0,1].$ The lemma follows from the fact that
\begin{equation}\label{maximizer}
\max_{\nu\in [0,1]}\Vol_n\left(\nu K-(1-\nu)K\right)=\Vol_n\left(\frac{K}{2}-\frac{K}{2}\right)=\frac{\Vol_n(K-K)}{2^n}.
\end{equation}
To establish \eqref{maximizer}, let us consider an auxiliary $(n+1)$-dimensional convex set ${\mathcal C}$ given by
$${\mathcal C}:=\textrm{conv}\left(K\times\{0\}\cup (-K)\times\{1\}\right)$$
(let us remark that the use of such auxiliary sets is rather standard and goes back at least to
C.A.~Rogers and G.C.~Shephard \cite{RS58}; also, see S.~Artstein-Avidan \cite{ShArt}).
Observe that for any $\nu\in[0,1]$ we have
$${\mathcal C}\cap (e_{n+1}^{\perp}+\nu e_{n+1})=\left(\nu K-(1-\nu)K\right)\times\{\nu\}.$$
The set ${\mathcal C}$ is convex and symmetric with respect to $\frac{1}{2}e_{n+1}$. Hence, the $n$-dimensional
section of ${\mathcal C}$ given by the hyperplane $e_{n+1}^{\perp}+\frac{1}{2} e_{n+1}$, has maximal
$n$-dimensional volume among all other sections of ${\mathcal C}$ parallel to it.
\end{proof}

Next, for the reader's convenience we provide a standard estimate of the covering number.
\begin{lemma}\label{net card}
Let $n$ be a sufficiently large positive integer,
and let $K$ be a convex body in $\R^n$ with the origin in its interior. Then for any $\varepsilon\in(0,1]$
there exists an $\varepsilon$-net on $K$ of cardinality at most $\left(\frac{5}{\varepsilon}\right)^n$.
\end{lemma}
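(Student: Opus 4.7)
The plan is to carry out the classical volumetric packing argument, which is the standard route to bounds of the form $(c/\varepsilon)^n$ for covering numbers. First I would construct a maximal collection of points $\{x_1,\ldots,x_N\}\subseteq K$ with the property that the translates $\{x_i+(\varepsilon/2)K\}_{i=1}^N$ have pairwise disjoint interiors. Since $0\in K$ and $K$ is convex, all of these translates lie inside $K+(\varepsilon/2)K=(1+\varepsilon/2)K$, and a simple volume comparison gives
$$N\cdot\left(\tfrac{\varepsilon}{2}\right)^n\Vol_n(K)\leq\left(1+\tfrac{\varepsilon}{2}\right)^n\Vol_n(K),$$
so $N\leq(1+2/\varepsilon)^n\leq (3/\varepsilon)^n$ for $\varepsilon\in(0,1]$.

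Next I would argue that $\mathcal{N}:=\{x_i\}$ is an $\varepsilon$-net. By maximality, any $y\in K$ has the property that $y+(\varepsilon/2)K$ meets some $x_i+(\varepsilon/2)K$, and unpacking the intersection yields $y-x_i\in(\varepsilon/2)(K-K)$. Thus
$$K\subseteq\bigcup_{i=1}^N\bigl(x_i+(\varepsilon/2)(K-K)\bigr).$$
When $K$ is centrally symmetric, $(K-K)/2=K$ and this immediately gives the desired $\varepsilon K$-covering with cardinality $(3/\varepsilon)^n\leq (5/\varepsilon)^n$, and the proof is complete.

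The hard part, and the main obstacle, is the non-symmetric case: the packing argument delivers a covering by translates of $(\varepsilon/2)(K-K)$, which for a general convex body can be genuinely larger than $\varepsilon K$. I would address this by repeating the packing step with the symmetric convex body $T:=(K-K)/2$ replacing $K$ in the role of the packing body, and then invoking Lemma~\ref{K-K} to control $\Vol_n(K+(\varepsilon/2)T)=\Vol_n((1+\varepsilon/4)K-(\varepsilon/4)K)$ by $(1+\varepsilon/2)^n\cdot 2^{-n}\Vol_n(K-K)$. Since $\Vol_n(T)=2^{-n}\Vol_n(K-K)$, the two factors of $\Vol_n(K-K)$ cancel and one again recovers a bound of order $(1+2/\varepsilon)^n$ on the packing cardinality. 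The only remaining task is to pass from the resulting $\varepsilon T$-covering to an $\varepsilon K$-covering; the extra factor this introduces is controlled via the Rogers--Shephard inequality $\Vol_n(K-K)\leq \binom{2n}{n}\Vol_n(K)$, and the slack between $(3/\varepsilon)^n$ and $(5/\varepsilon)^n$, together with the ``sufficiently large $n$'' hypothesis, comfortably absorbs this loss.
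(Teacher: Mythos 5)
Your argument is fine for centrally-symmetric $K$, but the fix you sketch for the non-symmetric case has a quantitative gap that is fatal. After packing $K$ with translates of $(\varepsilon/2)T$, where $T=(K-K)/2$, maximality gives you a covering of $K$ by at most $(3/\varepsilon)^n$ translates of $\varepsilon T$, not of $\varepsilon K$. To convert this into an $\varepsilon K$-covering you must cover each translate of $\varepsilon T$ by translates of $\varepsilon K$, which is the same as covering $T$ by translates of $K$. That requires at least $\Vol_n(T)/\Vol_n(K)=2^{-n}\Vol_n(K-K)/\Vol_n(K)$ translates, and for a simplex this ratio equals $\binom{2n}{n}/2^n\sim 2^n/\sqrt{\pi n}$. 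Your total would then be at least $(3/\varepsilon)^n\cdot 2^n/\sqrt{\pi n}=(6/\varepsilon)^n/\sqrt{\pi n}$, which exceeds $(5/\varepsilon)^n$ for all large $n$ because $(6/5)^n$ grows much faster than $\sqrt{\pi n}$. So the slack between $3$ and $5$ does \emph{not} absorb the Rogers--Shephard loss: the loss is of order $2^n$, the slack only of order $(5/3)^n$. Moreover, the conversion step ``cover $T$ by translates of $K$'' is itself a non-symmetric covering problem of precisely the kind the lemma is meant to resolve, so making it efficient requires the very tool you are trying to avoid.

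The paper avoids this entirely by citing the Rogers--Zong lemma, which produces, for an arbitrary convex body $K$, an $\varepsilon K$-cover of cardinality at most $\frac{\Vol_n(K-\varepsilon K)}{\Vol_n(\varepsilon K)}(n\log n+n\log\log n+5n)$. This is exactly the asymmetric substitute for the packing argument: instead of paying an exponential factor $\sim 2^n$ to symmetrize, Rogers--Zong pays only a polynomial factor $O(n\log n)$. Combining with Lemma~\ref{K-K} (to bound $\Vol_n(K-\varepsilon K)$) and Rogers--Shephard (to bound $\Vol_n(K-K)$) then yields $(1+o(1))\frac{4^n(1+\varepsilon)^n n\log n}{\sqrt{\pi n}\,2^n\varepsilon^n}\le(5/\varepsilon)^n$ for $n$ large, since $(1+\varepsilon)^n\le 2^n$ and $(5/4)^n$ eventually dominates $n\log n/\sqrt{\pi n}$. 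If you want a self-contained proof in the symmetric spirit, you need Rogers' density theorem (or Rogers--Zong) rather than the bare maximal-packing bound.
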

\begin{proof}
By the Rogers--Zong lemma \cite{RZ}, there exists an $\varepsilon$-net of cardinality at most
$$\frac{\Vol_n(K-\varepsilon K)}{\Vol_n(\varepsilon K)}\bigl(n\log n+n\log\log n+5n\bigr).$$
By Lemma \ref{K-K}, along with the Rogers--Shephard lemma \cite{RS}, we estimate 
$$\frac{\Vol_n(K-\varepsilon K)}{\Vol_n(\varepsilon K)}\bigl(n\log n+n\log\log n+5n\bigr)\leq (1+o(1))
\frac{4^n(1+\varepsilon)^n n\log n}{\sqrt{\pi n} 2^n \varepsilon^n}\leq \left(\frac{5}{\varepsilon}\right)^n.$$
\end{proof}

\section{Proof of Proposition~\ref{pure randomization}}\label{s: main res}

Let $(\lambda_i)_{i=1}^m$ satisfy the assumptions of the proposition,
and let $X_1,X_2,\dots,X_m$ be jointly independent random vectors, where each $X_i$ is uniformly distributed in
$K-\lambda_i K$.
Assume without loss of generality that $\Vol_n(K)=1$.

We shall estimate the probability
\begin{equation*}
\Prob\Bigl\{K\subset \bigcup_{i=1}^m (X_i+\lambda_i K)\Bigr\}.
\end{equation*}
Let $\varepsilon\in(0,1]$ be chosen later, and
consider an $\varepsilon$-net $\Net$ on $K$ of cardinality at most $\left(\frac{5}{\varepsilon}\right)^n$
(which exists according to Lemma~\ref{net card}).
We can safely assume that $\Net\subset K-\varepsilon K$.
Observe that
\begin{equation*}
\Prob\Bigl\{\exists x\in K\,:\,
x\notin\bigcup_{i=1}^m (X_i+\lambda_i K)\Bigr\}
\leq \Prob\Bigl\{\exists y\in \Net\,:\, y\notin \bigcup_{i=1}^m (X_i+(\lambda_i-\varepsilon)_+ K)\Bigr\},
\end{equation*}
where $(\lambda_i-\varepsilon)_+:=\max(0,\lambda_i-\varepsilon)$. Hence, by the union bound,
\begin{equation*}
\Prob\Bigl\{K\subset \bigcup_{i=1}^m (X_i+\lambda_i K)\Bigr\}
\geq 1-\left(\frac{5}{\varepsilon}\right)^n \max_{y\in \Net}
\Prob\Bigl\{y\not\in \bigcup_{i=1}^m (X_i+(\lambda_i-\varepsilon)_+ K)\Bigr\}.
\end{equation*}
Fix any $y\in\Net$ and note that
\begin{equation*}
\Prob\Bigl\{y\not\in \bigcup_{i=1}^m (X_i+(\lambda_i-\varepsilon)_+ K)\Bigr\}
=\prod_{i=1}^m \left(1-\Prob\bigl\{y\in X_i+(\lambda_i-\varepsilon)_+K\bigr\}\right).
\end{equation*}
Further, observe that 
\begin{equation*}
\Prob\bigl\{y\in X_i+(\lambda_i-\varepsilon)_+K\bigr\}
=\Prob\bigl\{X_i\in y-(\lambda_i-\varepsilon)_+K\bigr\}=\frac{{(\lambda_i-\varepsilon)_+}^n}{\Vol_n(K-\lambda_i K)},
\end{equation*}
where the last equality is due to the fact that $\Vol_n(K)=1$ and that
$$y-(\lambda_i-\varepsilon)K\subset K-\lambda_i K$$ 
whenever $\lambda_i> \varepsilon$.
Combining the above relations, we obtain
\begin{equation*}
\Prob\Bigl\{K\subset \bigcup_{i=1}^m (X_i+\lambda_i K)\Bigr\}
\geq 1-\left(\frac{5}{\varepsilon}\right)^n \prod_{i=1}^m \left(1-\frac{{(\lambda_i-\varepsilon)_+}^n}{\Vol_n(K-\lambda_i K)}\right).
\end{equation*}

Now, our aim is to show that under the assumptions of the proposition there exists an $\varepsilon$ such that
\begin{equation*}
\left(\frac{5}{\varepsilon}\right)^n \prod_{i=1}^m \left(1-\frac{{(\lambda_i-\varepsilon)_+}^n}
{\Vol_n(K-\lambda_i K)}\right)\leq e^{- 0.3n},
\end{equation*}
or, equivalently,
$$\sum_{i=1}^m \log\left(1-\frac{{(\lambda_i-\varepsilon)_+}^n}
{\Vol_n(K-\lambda_i K)}\right)\leq n\log\Bigl(\frac{\varepsilon}{5}\Bigr)-0.3 n.$$
In view of Lemma~\ref{K-K}, and the relation $\log(1-t)\leq -t$ valid for all $t\geq 0$,
we have
\begin{align*}
\sum_{i=1}^m \log\left(1-\frac{{(\lambda_i-\varepsilon)_+}^n}
{\Vol_n(K-\lambda_i K)}\right)
&\leq
\sum_{i=1}^m \log\left(1-\frac{2^n{(\lambda_i-\varepsilon)_+}^n}
{(1+\lambda_i)^n\Vol_n(K-K)}\right)\\
&\leq
-\sum_{i=1}^m\frac{2^n{(\lambda_i-\varepsilon)_+}^n}
{(1+\lambda_i)^n\Vol_n(K-K)}.
\end{align*}
Thus, in order to prove the proposition, it is sufficient to show that for some $\varepsilon\in(0,1]$ we have
\begin{equation}\label{eq: enough}
n\log 5+n\log\frac{1}{\varepsilon}-\sum_{i=1}^m \frac{2^n {(\lambda_i-\varepsilon)_+}^n}{(1+\lambda_i)^n \Vol_n(K-K)}\leq -0.3 n.
\end{equation}

Let $A_n:=1-\frac{4\log n}{n}$.
We consider two complimentary subsets of $\{1,2,\dots,m\}$:
\begin{align*}
L_1&=\bigl\{i\leq m:\,\lambda_i\geq A_n\bigr\}\\
L_2&=\bigl\{i\leq m:\,\lambda_i< A_n\bigr\}.
\end{align*}
The rest of the proof splits into two cases.

\textbf{Case 1:}
\begin{equation}\label{eq: sum 1}
\sum_{i\in L_1} {\lambda_i}^n\geq \left(1-\frac{1}{\log n}\right)\sum_{i=1}^m {\lambda_i}^n.
\end{equation}
Choose $\varepsilon:=\frac{A_n}{n\log n}$.
Then 
$$(\lambda_i-\varepsilon)^n\geq {\lambda_i}^n\left(1-\frac{1}{\log n}\right)\;\;\mbox{ for all }i\in L_1,$$
whence, using the condition $\lambda_i\leq 1$ together with \eqref{eq: sum 1} and the condition
on the sum of ${\lambda_i}^n$, we get
\begin{align*}
\sum_{i\in L_1} \frac{2^n (\lambda_i-\varepsilon)^n}{(1+\lambda_i)^n \Vol_n(K-K)}
&\geq\Bigl(1-\frac{1}{\log n}\Bigr)\sum_{i\in L_1} \frac{{\lambda_i}^n}{\Vol_n(K-K)}\\
&\geq \Bigl(1-\frac{1}{\log n}\Bigr)^2\sum_{i=1}^m \frac{{\lambda_i}^n}{\Vol_n(K-K)}\\
&\geq \Bigl(1-\frac{1}{\log n}\Bigr)^2(n\log n+n\log\log n+4n).
\end{align*}
It is easy to check, using the above inequality, that \eqref{eq: enough} is satisfied, and Case 1 is settled.

\textbf{Case 2:}
$$\sum_{i\in L_2} {\lambda_i}^n> \frac{1}{\log n}\sum_{i=1}^m {\lambda_i}^n.$$
Set $\varepsilon:=\frac{e^{-n}}{n\log n}$.
By the assumption of the proposition, $\lambda_i\geq e^{-n}$ for all $i$. Hence, 
$${(\lambda_i-\varepsilon)_+}^n\geq \left(1-\frac{1}{\log n}\right){\lambda_i}^n,\;\;i\leq m,$$ 
and the left hand side of \eqref{eq: enough} is less than
\begin{align*}
&n\log 5+n^2+n\log n+n\log\log n-\left(1-\frac{1}{\log n}\right)\frac{2^n}{\Vol_n(K-K)}\sum_{\lambda_i\in L_2}\frac{ {\lambda_i}^n}{(1+\lambda_i)^n}\\
&n\log 5+n^2+n\log n+n\log\log n-\frac{1}{\log n}\left(1-\frac{1}{\log n}\right)\frac{2^n}{\Vol_n(K-K)}\sum_{i=1}^m\frac{ {\lambda_i}^n}{(1+\lambda_i)^n}\\
&\leq
n\log 5+n^2+n\log n+n\log\log n-\left(1-\frac{1}{\log n}\right)\frac{2^n}{(1+A_n)^n}\frac{n\log n+n\log\log n+4n}{\log n}\\
&\ll -0.3 n.
\end{align*}
Here, we used the definition of $A_n$,
and the assumption on the sum of ${\lambda_i}^n$.
Thus, Case 2 is settled, and the proof of Proposition~\ref{pure randomization} is complete.

\section{Proof of the Corollaries}\label{s: cor}

\subsection{Proof of Corollary~\ref{illumination}}

First, we recall that if $K$ is a convex body
and $K\subset \cup_{i=1}^m {\rm int}(K)+x_i$
for some non-zero vectors $x_1,x_2,\dots,x_m$,
then there exists $R>0$ such that points $Rx_1,Rx_2,\dots,Rx_m$ illuminate $K$
(see, for example, \cite[proof of Theorem~34.3]{BMS}).
It is not difficult to verify the following quantitative version of above observation:
If $K\subset \cup_{i=1}^m (1-\varepsilon)K+x_i$ for some $\varepsilon\in (0,1)$,
then $Rx_1,Rx_2,\dots,Rx_m$ illuminate $K$ whenever $R> \frac{1}{\varepsilon}$.

Let $n$ be a sufficiently large integer, denote
$$
m:=\Bigl\lceil (n\log n+n\log\log n+5n)\frac{\Vol_n(K-K)}{\Vol_n(K)}\Bigr\rceil,
$$
and select $\varepsilon=\varepsilon(n)>0$ small enough so that
$$
m(1-\varepsilon)^n\geq (n\log n+n\log\log n+4n)\frac{\Vol_n(K-K)}{\Vol_n(K)}.
$$
Let $X_1,X_2,\dots,X_m$ be i.i.d.\ uniformly distributed in $K-K$. Then, by
Proposition~\ref{pure randomization},
with probability at least $1-e^{-0.3 n}$ the collection $\{X_i+(1-\varepsilon)K\}_{i=1}^m$
forms a covering of $K$; hence, for any fixed $R>\frac{1}{\varepsilon}$,
the vectors $RX_1,RX_2,\dots,R X_m$ illuminate $K$ with probability at least $1-e^{-0.3n}$.

\subsection{Proof of Corollary~\ref{main result}}

The next lemma is a variation of the well known theorem of C.A.~Rogers \cite{R57}
on economical coverings of $\R^n$ with convex bodies. 
\begin{lemma}\label{Rogers variation}
Let $n$ be a sufficiently large positive integer and
$K$ be a convex body in $\R^n$ such that $-\frac{1}{n}K\subset K\subset B_\infty^n$.
Further, let $L\geq n^2$ and let $\left(\lambda_i\right)_{i=1}^M$ be a sequence of numbers in $[1/2,1]$ with
$$\sum_{i=1}^M \Vol_n(\lambda_i K)\geq (n\log n+n\log\log n+5n)\Vol_n(L B_\infty^n).$$
Then there exists a translative covering of $L B_\infty^n$ by $\{\lambda_i K\}_{i=1}^M$.
\end{lemma}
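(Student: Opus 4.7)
The plan is to combine the Rogers--Zong covering lemma with a two-level bookkeeping: first bin the $\lambda_i$'s into classes of nearly-equal scale, then allocate portions of $LB_\infty^n$ to each class in proportion to the total volume it contributes.

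Concretely, I would partition $\{1,\ldots,M\}$ into $C = O(n\log n)$ classes $I_1,\ldots,I_C$ by placing indices with $\lambda_i$ in short multiplicative windows of ratio $1 + 1/(n\log n)$ into the same class, so that $(\lambda_{\max,c}/\lambda_{\min,c})^n \leq 1 + 1/\log n$. Setting $V_c := \sum_{i\in I_c}\Vol_n(\lambda_iK)$, the hypothesis gives $\sum_c V_c \geq (n\log n + n\log\log n + 5n)\Vol_n(LB_\infty^n)$. Next, partition $LB_\infty^n$ into regions $R_1,\ldots,R_C$, each a disjoint union of sub-cubes of side $s := n\log n$ (feasible since $L\geq n^2$), with volumes chosen so that $\Vol_n(R_c) \leq V_c/(n\log n + n\log\log n + 4n)$; such an allocation exists because the total available volume exceeds $\Vol_n(LB_\infty^n)$ by a factor $1 + \Omega(1/\log n)$, leaving slack for combinatorial rounding during the tiling. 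Then, for each class $c$, apply the Rogers--Zong lemma \cite{RZ} to cover $R_c$ by translates of $\lambda_{\min,c}K$: the boundary overhead $\Vol_n(R_c - \lambda_{\min,c}K)/\Vol_n(R_c) \leq (1+1/s)^n = 1+O(1/\log n)$ (using $-\lambda_{\min,c}K\subset B_\infty^n$ and that $R_c$ is a union of sub-cubes of side $s$), combined with the volume constraint on $R_c$ and the binning bound $V_c \leq (1+1/\log n)|I_c|\Vol_n(\lambda_{\min,c}K)$, makes the Rogers--Zong count no larger than $|I_c|$. Finally, since $\lambda_{\min,c}K \subset \lambda_iK$ for every $i \in I_c$ (as $0\in K$ and $\lambda_{\min,c}/\lambda_i \leq 1$), substitute each translate of $\lambda_{\min,c}K$ by a distinct $\lambda_iK$ from the same class placed at the same center, still covering $R_c$. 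The union over $c$ then covers $LB_\infty^n$.

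The main obstacle is the tight volumetric bookkeeping: the two $1 + O(1/\log n)$ losses, from binning and from the Rogers--Zong boundary overhead, must jointly fit inside the $n$-term gap between the hypothesized density ($5n$) and the density needed after losses ($4n$). This balance is exactly what fixes the $5n$ constant in the statement and forces the hypothesis $L \geq n^2$, which permits sub-cubes of side $n\log n$. The assumption $K\subset B_\infty^n$ enters through the containment $-\lambda K\subset B_\infty^n$ that controls the boundary overhead. The hypothesis $-\frac{1}{n}K\subset K$ does not appear directly in this sketch; I suspect it is used in a refinement I am not capturing, e.g., to improve the Rogers--Zong boundary estimate via $\frac{1}{n}K\subset K\cap(-K)$, or equivalently to ensure that the origin lies well inside $K$, so that the inclusions $\lambda_{\min,c}K\subset \lambda_iK$ used in the final substitution are stable under the required translations.
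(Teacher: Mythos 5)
Your outline runs into a concrete numerical obstruction at the Rogers--Zong step, and the missing ingredient is exactly the ``patching'' device that the paper uses in its (rather different) proof.

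Let us track your losses precisely. With allocation $\Vol_n(R_c)\leq V_c/(n\log n+n\log\log n+4n)$, with binning ratio $(\lambda_{\max,c}/\lambda_{\min,c})^n\leq 1+1/\log n$ so that $\Vol_n(\lambda_{\min,c}K)\geq V_c/\bigl(|I_c|(1+1/\log n)\bigr)$, and with boundary overhead $(1+2/s)^n$ on sub-cubes of side $s=n\log n$ (note: it is $2/s$, not $1/s$, since $-\lambda_{\min,c}K\subset B_\infty^n=[-1,1]^n$ has diameter $2$ in each coordinate), the Rogers--Zong count for $R_c$ by $\lambda_{\min,c}K$ is bounded by
$$
|I_c|\cdot(1+2/s)^n\cdot\Bigl(1+\frac{1}{\log n}\Bigr)\cdot\frac{n\log n+n\log\log n+5n}{n\log n+n\log\log n+4n}
\approx |I_c|\Bigl(1+\frac{4}{\log n}\Bigr),
$$
which is strictly larger than $|I_c|$. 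So even before worrying about combinatorial rounding, each class is short by about $4|I_c|/\log n$ copies. The point you are missing is that there are \emph{three} multiplicative losses (binning, boundary, and the ratio between the Rogers--Zong constant $n\log n+n\log\log n+5n$ and your allocation divisor), but the hypothesis only provides a single $1+O(1/\log n)$ of slack, and that slack is entirely consumed just to make the allocation sum to $\Vol_n(LB_\infty^n)$. Tuning $s$ or the bin ratio downward to make the boundary and binning losses $o(1/\log n)$ does not help: the Rogers--Zong loss factor $\frac{5n+\cdots}{4n+\cdots}$ is intrinsic, and if you instead set the divisor equal to $n\log n+n\log\log n+5n$ to cancel it, then $\sum_c\Vol_n(R_c)$ can fall short of $\Vol_n(LB_\infty^n)$ by a factor $(1+O(1/\log n))^{-1}$, leaving an uncovered region of volume $\Theta(\Vol_n(LB_\infty^n)/\log n)$ with no copies left to handle it.

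The paper's proof circumvents this by \emph{not} trying to achieve a full cover in one shot. It takes the least $M'$ with $\sum_{i\leq M'}\Vol_n(\lambda_iK)\geq(n\log n+n\log\log n+4n)\Vol_n(LB_\infty^n)$, places $Y_1,\dots,Y_{M'}$ at random in $LB_\infty^n-2K$, shows that the (shrunken) translates $(\lambda_i-\tfrac{1}{2n\log n})K+Y_i$ fail to cover a random $x$ with probability at most $\exp(-n\log n-n\log\log n-2n)$, and derandomizes to get a deterministic placement leaving an uncovered set $S$ of extremely small volume. It then patches $S$ with the remaining copies $\lambda_{M'+1}K,\dots,\lambda_MK$: a maximal $\frac{1}{2n\log n}K$-separated set $\Net\subset S$ has at most $\Vol_n(LB_\infty^n-K)/(e^n\Vol_n(K))$ points (volume counting using the exponentially small measure of $S$), while $M-M'\geq\bigl(\Vol_n(LB_\infty^n)-\Vol_n(K)\bigr)/\Vol_n(K)\geq|\Net|$ by the choice of $M'$. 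This is precisely where your two unused hypotheses enter: $-\frac{1}{n}K\subset K$ gives $\frac{1}{2n\log n}(K-K)\subset\frac{1}{2}K$, so maximality of $\Net$ yields $S\subset\Net+\frac12K$; and $\lambda_i\geq\frac12$ lets each leftover $\lambda_iK$ absorb one such $\frac12K$-neighborhood. Without a patching mechanism like this, the $1+O(1/\log n)$ discrepancies in your allocation have nowhere to go.
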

\begin{proof}
The proof to a large extent follows \cite{R57}; we provide it only for reader's convenience.
Let $M'\leq M$ be the least number such that
$$\sum_{i=1}^{M'} \Vol_n(\lambda_iK)\geq (n\log n+n\log\log n+4n)\Vol_n(L B_\infty^n),$$
and let $Y_i$ ($1\leq i\leq M'$) be independent random vectors uniformly distributed in $L B_\infty^n-2K$.
For any point $x\in L B_\infty^n-K$, using the condition $K\subset B_\infty^n$, we have
\begin{align*}
\Prob& \Bigl\{x\notin \Bigl(\lambda_i-\frac{1}{2n\log n}\Bigr) K+Y_i\;\;\mbox{for all}\;\;i\leq M'\Bigr\}\\
&=\prod_{i=1}^{M'}\Bigl(1-\Prob\Bigl\{Y_i\in x -\Bigl(\lambda_i-\frac{1}{2n\log n}\Bigr)K\Bigr\}\Bigr)\\
&\leq\prod_{i=1}^{M'}\Biggl(1-\Bigl(1-\frac{1}{n\log n}\Bigr)^n\frac{\Vol_n\bigl(\lambda_i K\bigr)}{\Vol_n(L B_\infty^n-2K)}\Biggr),
\end{align*}
where the last inequality is due to the fact that $x -\bigl(\lambda_i-\frac{1}{2n\log n}\bigr)K\subset L B_\infty^n-2K$.
Further, we have
\begin{align*}
\prod_{i=1}^{M'}&\Biggl(1-\Bigl(1-\frac{1}{n\log n}\Bigr)^n\frac{\Vol_n\bigl(\lambda_i K\bigr)}{\Vol_n(L B_\infty^n-2K)}\Biggr)\\
&\leq\exp\Biggl(-\Bigl(1-\frac{1}{n\log n}\Bigr)^n
\sum\limits_{i=1}^{M'}\frac{\Vol_n\bigl(\lambda_i K\bigr)}{\Vol_n(L B_\infty^n-2K)}\Biggr)\\
&\leq\exp\Biggl(-\frac{L^n}{(L+2)^n}\Bigl(1-\frac{1}{n\log n}\Bigr)^n
\sum\limits_{i=1}^{M'}\frac{\Vol_n\bigl(\lambda_i K\bigr)}{\Vol_n(L B_\infty^n)}\Biggr)\\
&\leq\exp\Biggl(-\frac{1}{(1+2n^{-2})^n}\Bigl(1-\frac{1}{\log n}\Bigr)
\bigl(n\log n+n\log\log n+4n\bigr)\Biggr)\\
&\leq \exp(-n\log n-n\log\log n-2n),
\end{align*}
where in the last inequality we used the assumption that $n$ is large.
Hence, there is a non-random collection of vectors $\{y_i\}_{i=1}^{M'}$ such that
\begin{align*}
\Vol_n&\Bigl((L B_\infty^n-K)\setminus \bigcup_{i=1}^{M'}
\Bigl(\Bigl(\lambda_i-\frac{1}{2n\log n}\Bigr)K+y_i\Bigr)\Bigr)\\
&\leq \exp(-n\log n-n\log\log n-2n)\Vol_n(LB_\infty^n-K).
\end{align*}
Suppose that $S:=L B_\infty^n\setminus \bigcup_{i=1}^{M'}\bigl(\lambda_i K+y_i\bigr)$
is non-empty. Let $\Net$ be a maximal discrete subset of $S$ such that
$$\Bigl(y-\frac{1}{2n\log n}K\Bigr)\cap \Bigl(y'-\frac{1}{2n\log n}K\Bigr)=\emptyset\;\;\mbox{for all}\;\;y\neq y'\in\Net.$$
Note that $\Net-\frac{1}{2n\log n}K\subset (L B_\infty^n-K)\setminus \bigcup_{i=1}^{M'}
\bigl((\lambda_i-\frac{1}{2n\log n})K+y_i\bigr)$, whence
\begin{equation}\label{eq: net card}
|\Net|\leq \frac{\exp(-n\log n-n\log\log n-2n)\Vol_n(LB_\infty^n-K)}{(2n\log n)^{-n}\Vol_n(K)}
\leq \frac{\Vol_n(LB_\infty^n-K)}{e^{n}\Vol_n(K)}.
\end{equation}
On the other hand, by the choice of $\Net$ and in view of the inclusion $-\frac{1}{n}K\subset K$, we have
$$S\subset \Net+\frac{1}{2n\log n}(K-K)\subset \Net+\frac{1}{2}K.$$
Finally, from the choice of $M'$ it follows that
$$\sum_{i=M'+1}^{M} \Vol_n(\lambda_i K)\geq \Vol_n(L B_\infty^n)-\Vol_n(K),$$
whence, using \eqref{eq: net card}, $M-M'\geq \frac{\Vol_n(L B_\infty^n)-\Vol_n(K)}{\Vol_n(K)}\geq |\Net|$.
It remains to define the points $\{y_i\}_{i=M'+1}^M$ so that $\{y_i\}_{i=M'+1}^M=\Net$;
then the collection $\{y_i+\lambda_i K\}_{i=1}^M$ covers $LB_\infty^n$.
\end{proof}

\begin{proof}[Proof of Corollary~\ref{main result}]
Let $(\lambda_i)_{i=1}^\infty$ be a sequence of numbers in $(0,1)$ such that
$$\sum\limits_{i=1}^\infty\Vol_n(\lambda_i K)> (n\log n+n\log\log n+5n)\Vol_n(K-K).$$
We need to show that in this case there exists a covering of $K$ of the form $\{y_i+\lambda_i K\}_{i=1}^\infty$.
We will assume that $\sum\limits_{i=1}^\infty\Vol_n(\lambda_i K)<\infty$.
First, suppose that
$$\sum\limits_{i:\lambda_i\geq n^{-5}}\Vol_n(\lambda_i K)\geq (n\log n+n\log\log n+4n)\Vol_n(K-K).$$
Then the result immediately follows from Proposition~\ref{pure randomization}.

Otherwise,
\begin{equation}\label{torefer}
\sum\limits_{i:\lambda_i< n^{-5}}\Vol_n(\lambda_i K)\geq \Vol_n(K-K)\geq 2^n\Vol_n(K).
\end{equation}
Further, without loss of generality (for example, by applying John's theorem \cite{J48, B92}
together with an appropriate affine transformation) we can assume that
$B_\infty^n\subset K\subset n^{3/2}B_\infty^n$.
Let $\{x_j+n^{-3/2}B_\infty^n\}_{j=1}^N$ be a minimal covering of $K$ by cubes with pairwise disjoint interiors.
Let us define subsets $I_k\subset\N$ by
$$I_k:=\bigl\{i\in\N:\,\lambda_i n^5\in (2^{-k},2^{-k+1}]\bigr\},\;\;k=1,2,\dots$$
and for every $k\in\N$ set
$$F(k):=\Bigl\lfloor\frac{\sum\nolimits_{i\in I_k}\Vol_n(\lambda_i K)}{(n\log n+n\log\log n+6n)
\Vol_n(2^{-k+1}n^{-3/2}B_\infty^n)}\Bigr\rfloor.$$
Further, for those $k$ with $F(k)>0$, we let $I_k^\ell$ ($\ell=1,2,\dots,F(k)$)
be a partition of $I_k$ such that
\begin{equation}\label{kell partition}
\sum\limits_{i\in I_k^\ell}\Vol_n(\lambda_i K)\geq (n\log n+n\log\log n+5n)\Vol_n(2^{-k+1}n^{-3/2}B_\infty^n),\;\;\ell=1,2,\dots,F(k).
\end{equation}
Note that such partitions can always be constructed as the volume of each $\lambda_i K$ ($i\in I_k$)
is negligible compared to $\Vol_n(2^{-k+1}n^{-3/2}B_\infty^n)$.
Further,
$$\sum\limits_{k:F(k)<n}\sum\limits_{i\in I_k}\Vol_n(\lambda_i K)
\leq 4n^2\log n\,\Vol_n(n^{-3/2}B_\infty^n)\ll n^{-n}\Vol_n(K),$$
whence, by (\ref{torefer})
\begin{align*}
\sum\limits_{k:F(k)\geq n}F(k)\Vol_n(2^{-k+1}n^{-3/2}B_\infty^n)
&\geq \sum\limits_{k:F(k)\geq n}
\frac{n}{n+1}\frac{\sum\nolimits_{i\in I_k}\Vol_n(\lambda_i K)}{(n\log n+n\log\log n+6n)}\\
&\geq \frac{n}{n+1}\frac{2^n\Vol_n(K)}{n\log n+n\log\log n+6n}-n^{-n}\Vol_n(K)\\
&> \Vol_n\Bigl(\bigcup\limits_{j=1}^N (x_j+n^{-3/2}B_\infty^n)\Bigr),
\end{align*}
where the last inequality follows from the trivial observation
$$\bigcup\limits_{j=1}^N (x_j+n^{-3/2}B_\infty^n)\subset (1+2n^{-3/2})K$$
and our assumption that $n$ is large.

The last relation implies that there exists a finite collection of translates
$$\mathcal C:=\bigl\{y_k^\ell+2^{-k+1}n^{-3/2}B_\infty^n,\;k:F(k)\geq n,\;\ell=1,2,\dots,F(k)\bigr\}\;\;(y_k^\ell\in\R^n)$$
such that the union of the cubes from $\mathcal C$ covers $\bigcup\limits_{j=1}^N (x_j+n^{-3/2}B_\infty^n)$,
hence, $K$.
For each cube from $\mathcal C$ we construct a translative covering by sets
$\{\lambda_i K\}_{i\in I_k^\ell}$ using condition \eqref{kell partition} and Lemma~\ref{Rogers variation}.
This completes the proof.
\end{proof}

\end{document}